\numberwithin{equation}{section}
\newtheorem{theorem}{Theorem}[section]
\newtheorem{thm}[theorem]{Theorem}
\newtheorem{lem}[theorem]{Lemma}
\newtheorem{prop}[theorem]{Proposition}
\newtheorem{cor}[theorem]{Corollary}
\newtheorem{defn}[theorem]{Definition}
\newtheorem{rem}[theorem]{Remark}
\newcommand{\Ass}{\mbox{Ass}}
\renewcommand{\dim}{\mbox{dim}\,}
\newcommand{\J}{\mbox{J}}
\newcommand{\fa}{\mathfrak{a}}
\newcommand{\fb}{\mathfrak{b}}
\newcommand{\fm}{\mathfrak{m}}
\begin{document}

\bibliographystyle{amsplain}

\title{On graded local cohomology modules defined by a pair of ideals }

\author[M. Jahangiri]{M. Jahangiri}
\address{M. Jahangiri\\Faculty of Mathematical Sciences and Computer,  Kharazmi
University,  Tehran, Iran AND  Institute for Research in Fundamental Sciences (IPM)  P. O.  Box: 19395-5746, Tehran, Iran.}
\email{jahangiri@khu.ac.ir}
\author[Kh. Ahmadi Amoli]{Kh. Ahmadi Amoli}
\address{ Kh. Ahmadi Amoli\\Payame Noor University, Po Box 19395-3697, Tehran, Iran}
\email{khahmadi@pnu.ac.ir}
\author[Z. Habibi]{Z. Habibi}
\address{Z. Habibi \\Payame Noor University, Po Box 19395-3697, Tehran, Iran}
\email{z\_habibi@pnu.ac.ir}

\subjclass[2010]{13A02, 13E10, 13D45}

\keywords{graded modules, local cohomology module with respect to a
pair of ideals, Artinian modules, tameness.}

\thanks{The first author was in part supported by a grant from IPM (No.
93130111)}
 
\maketitle
\begin{abstract}

Let $R = \bigoplus_{n \in \mathbb{N}_{0}} R_{n}$ be a standard
graded ring, $M$ be a finite graded $R$-module and $J$ be a
homogenous ideal of $R$. In this paper we study the graded structure
of the $i$-th local cohomology  module of $M$ defined by a pair of
ideals $(R_{+},J)$, i.e. $H^{i}_{R_{+},J}(M)$. More precisely, we
discuss finiteness property and vanishing of the graded components
$H^{i}_{R_{+},J}(M)_{n}$.

Also, we study the Artinian property and tameness of certain
submodules and quotient modules of $H^{i}_{R_{+},J}(M)$.

\end{abstract}


\section{introduction}

Let $R$ denotes a commutative Noetherian ring, $M$ an $R$-module and
$I$ and $J$ stand for two ideals of $R$. Takahashi, et. all in
\cite{TAK} introduced the $i$-th local cohomology functor with
respect to $(I,J)$, denoted by $H^{i}_{I,J}(-)$, as the $i$-th right
derived functor of the $(I,J)$- torsion functor $\Gamma _{I,J}(-)$,
where $\Gamma _{I,J}(M):=\{x \in M : I^{n}x\subseteq Jx$ for $n\gg
1\}$. This notion is the ordinary local cohomology functor when
$J=0$ (see \cite{B-SH}). The main motivation for this generalization
comes from the study of a dual of ordinary local cohomology modules
$H^{i}_{I}(M)$ (see \cite{sch}). Basic facts and more information
about local cohomology defined by a pair of ideals can be obtained
from \cite{TAK}, \cite{chu1} and \cite{CH-W}.

 Now, let
$R=\oplus_{n\in \mathbb{N}_{0}}R_{n}$ be a standard graded
Noetherian ring, i.e. the base ring $R_{0}$ is a commutative
Noetherian ring and $R$ is generated, as an $R_{0}$-algebra, by
finitely many elements of $R_{1}$. Also, let $J$ be a homogenous
ideal of $R$, $M$ be a graded $R$-module and $R_{+}:=\oplus_{n\in
\mathbb{N}}R_{n}$ be the irrelevant ideal of $R$. It is well known
(\cite[Section 12]{B-SH}) that for all $i\geq0$ the $i$-th local
cohomology module $H^{i}_{J}(M)$ of $M$ with respect to $J$ has a
natural grading and that, in the case where $M$ is finite,
$H^{i}_{R_{+}}(M)_{n}$ is a finite $R_{0}$-module for all $n\in
\mathbb{Z}$ and vanishes for all $n\gg0$ (\cite[Theorem
15.1.5]{B-SH}).

 In this paper, first, we show that $H^{i}_{I,J}(M)$
has a natural grading, when $I$ and $J$ are homogenous ideals of $R$
and $M$ is a graded $R$-module. Then, we show that, although  in
spite of the ordinary case, $H^{i}_{R_{+},J}(M)_{n}$ might be
non-finite over $R_{0}$ for some $n \in \mathbb{Z}$ and non-zero for
all $n\gg0$, but in some special cases they are finite for all $n
\in \mathbb{Z}$ and vanishes for all $n\gg0$. More precisely, we
show that if $(R_{0},\fm_{0})$ is local,  $ R_{+}\subseteq \fb$ is
an ideal of $R$ and $\bigcap \limits _{k=0}^\infty
\fm_{0}^{k}H^{i}_{\fb,\fm_{0}R}(M)_{n}=0$ for all $n\gg0$, then
$H^{i}_{\fb,\fm_{0}R}(M)_{n}=0$ for all $n\gg 0$. Also, we present
an equivalent condition for the finiteness of  components
$H^{i}_{R_{+},J}(M)_{n}$ (Theorem \ref{m0}).

In the last section, first, we study the asymptotic stability of the
set $\{Ass_{R_{0}}(H^{i}_{R_{+},J}(M)_{n})\}_{n\in \mathbb{Z}}$ for
$n\rightarrow -\infty$  in a special case (Theorem \ref{ass}). Then
we present some results about Artinianness of some quotients of
$H^{i}_{R_{+},J}(M)$. In particular, we show that  if $R_{0}$ is a
local ring with maximal ideal $\fm_{0}$ and $c\in \mathbb{Z}$ such
that $H^{i}_{R_{+},\fm_{0}R}(M)$ is Artinian for all $i>c$, then the
$R$-module
$H^{c}_{R_{+},\fm_{0}R}(M)/\fm_{0}H^{c}_{R_{+},\fm_{0}R}(M)$ is
Artinian  (Theorem \ref{ch-w}). Finally, we show that
$H^{i}_{R_{+},J}(M)$ is "tame" in a special case (Corollary
\ref{tam}).

\section{graded local cohomology modules defined by a pair of ideals}

Let $R= \oplus_{n\in \mathbb{Z} }R_{n}$ be a graded ring, $I$ and
$J$ be two homogenous ideals of $R$ and $M= \oplus_{n\in \mathbb{Z}
}M_{n}$ be a graded $R$-module. Then, it is natural to ask whether
the local cohomology modules $H^{i}_{I,J}(M)$ for all ${i\in
\mathbb{N}_{0}}$, also carry structures as graded $R$-modules. In
this section we show that it has affirmative answer.

First we show that, the $(I,J)$-torsion functor $\Gamma_{I,J}(-)$
can be viewed as a (left exact, additive) functor from
$^\ast~\mathcal{C}$ to itself. Since the category
$^\ast~\mathcal{C}$, of all graded $R$-modules and homogeneous
$R$-homomorphisms, is an Abelian category which has enough
projective object and enough injective objects, we can therefore
carry out standard techniques of homological algebra in this
category. Hence, we can form the right derived functors
$^\ast~H^{i}_{I,J}(-)$ of $\Gamma_{I,J}(-)$ on the category
$^\ast~\mathcal{C}$.
\begin{lem}\label{ann}

 Let $x=x_{i_{1}}+\cdots+x_{i_{k}}\in M$ be such that
$x_{i_{j}}\in M_{i_{j}}$ for all $j=1,\cdots,k$. Then

\[r(Ann(x)) = \mathop{\cap}_{j=1}^k r(Ann (x_{i_{j}})) \]
\end{lem}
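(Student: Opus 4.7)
The plan is to prove the two inclusions separately. The inclusion $\bigcap_{j=1}^{k} r(\Ann(x_{i_j})) \subseteq r(\Ann(x))$ is immediate: one has $\bigcap_j \Ann(x_{i_j}) \subseteq \Ann(x)$ (an element killing every summand kills the sum), and passing to radicals preserves the inclusion.

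For the reverse inclusion the plan is to exploit the graded structure. Each $x_{i_j}$ is homogeneous, so $\Ann(x_{i_j})$ is a graded ideal, and hence so is its radical $r(\Ann(x_{i_j}))$; consequently $I := \bigcap_j r(\Ann(x_{i_j}))$ is a graded ideal. Since an element of $R$ lies in a graded ideal iff all its homogeneous components do, it suffices to take $a \in r(\Ann(x))$ with homogeneous decomposition $a = a_{t_1} + \cdots + a_{t_m}$ (the $a_{t_s} \in R_{t_s}$ being the non-zero homogeneous components, $t_1 < \cdots < t_m$) and show each $a_{t_s} \in r(\Ann(x_{i_j}))$ for every $j$. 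I would proceed by induction on $m$; the base case $m = 1$ is immediate, since $a$ homogeneous and $a^n x = 0$ gives $\sum_j a^n x_{i_j} = 0$ with summands of pairwise distinct degrees, forcing each $a^n x_{i_j} = 0$.

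For the inductive step, start from $a^{n_0} x = 0$ and collect the degree-$e$ component of $a^n x = 0$ (any $n \geq n_0$) to obtain $\sum_j (a^n)_{e-i_j}\, x_{i_j} = 0$, where $(a^n)_d$ denotes the degree-$d$ part of $a^n$. The key combinatorial observation is that any monomial $a_{t_1}^{\alpha_1}\cdots a_{t_m}^{\alpha_m}$ contributing to $(a^n)_{nt_m - \delta}$ (with $\sum_s \alpha_s = n$) must satisfy $\sum_{s<m}\alpha_s(t_m - t_s) = \delta$, forcing $\alpha_m \geq n - \delta$; thus $(a^n)_{nt_m - \delta}$ is divisible by $a_{t_m}^{n-\delta}$. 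Specializing to $e = n_0 t_m + i_k$ instantly gives $a_{t_m}^{n_0} x_{i_k} = 0$. Then, choosing $n$ successively larger for $j = k-1, k-2, \ldots, 1$ so that $n \geq n_0 + (i_k - i_j)$, the equation at $e = nt_m + i_j$ reads $a_{t_m}^n x_{i_j} + \sum_{l > j} (a^n)_{nt_m - (i_l - i_j)}\, x_{i_l} = 0$, and the $l$-th tail term vanishes because, by the divisibility, it factors through $a_{t_m}^{n - (i_l - i_j)} x_{i_l} = 0$ (valid for $n$ large, using the vanishings established at previous stages of the iteration). This extracts $a_{t_m}^{n} x_{i_j} = 0$ at each step, so $a_{t_m} \in I \subseteq r(\Ann(x))$.

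A binomial expansion $(a - a_{t_m})^M = \sum_i \binom{M}{i}(-a_{t_m})^i a^{M-i}$ applied to $x$ then shows $a' := a - a_{t_m} \in r(\Ann(x))$: for $M$ sufficiently large, every term either has $M - i \geq n_0$ (so $a^{M-i} x = 0$) or $i$ large enough that $a_{t_m}^i x = 0$ (using the powers $N_j$ with $a_{t_m}^{N_j} x_{i_j} = 0$ just established). Since $a'$ has $m-1$ non-zero homogeneous components, the inductive hypothesis applied to $a'$ gives $a_{t_s} \in r(\Ann(x_{i_j}))$ for $s < m$ as well, completing the argument. The main obstacle is this reverse inclusion, and within it the propagation of the single top-degree vanishing $a_{t_m}^{n_0} x_{i_k} = 0$ down to all the other $x_{i_j}$; the combinatorial divisibility observation about $(a^n)_{nt_m - \delta}$ is what makes this propagation possible.
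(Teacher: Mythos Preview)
Your proof is correct and follows the same overall architecture as the paper's: both establish the easy inclusion $\supseteq$ directly, and for $\subseteq$ both induct on the number $m$ of homogeneous components of the element of $r(\Ann(x))$, peeling off one extreme component at a time and using the easy inclusion to justify the subtraction step.

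The difference lies in which extreme component is peeled off and how. The paper works with the \emph{lowest}-degree component $y_{l_1}$ and proves the key sub-claim (that $y_{l_1}\in\bigcap_j r(\Ann(x_{i_j}))$) by a short trick: starting from $y^s x=0$, it multiplies by successive powers of $y_{l_1}$ and, using the inductive hypothesis to kill the terms with $j<j_0$, reads off the lowest surviving degree to obtain $y_{l_1}^{j_0} x_{i_{j_0}}=0$ for $j_0=1,2,\ldots,k$ in turn. You instead peel off the \emph{top}-degree component $a_{t_m}$ and use the combinatorial divisibility $(a^n)_{nt_m-\delta}\in a_{t_m}^{\,n-\delta}R$, iterating over $j$ from $k$ down to $1$ with carefully chosen $n$. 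Your route works (and your explicit bound $n\geq n_0+(i_k-i_j)$ does in fact suffice at every stage, by a small side induction), but the paper's multiplication trick is noticeably shorter and avoids the bookkeeping with varying $n$. Also, your binomial-expansion paragraph is unnecessary: once you have $a_{t_m}\in I\subseteq r(\Ann(x))$, the fact that $r(\Ann(x))$ is an ideal gives $a-a_{t_m}\in r(\Ann(x))$ immediately, which is exactly how the paper handles the corresponding step.
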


\begin{proof}
$\supseteq$: Is clear.

$\subseteq$: First we show that if $y=y_{l_{1}}+\cdots+y_{l_{m}}\in
Ann(x)$ such that $y_{l_{k}}\in R_{l_{k}}$ for all $k=1,\cdots,m$
and $l_{1}<l_{2}<\cdots<l_{m}$, then $y_{l_{1}}\in
\mathop{\cap}_{j=1}^k r(Ann(x_{i_{j}}))$. We have
$$0=yx=\sum_{j=1}^{n} \sum_{k=1}^{m}
y_{l_{k}}x_{i_{j}},\hspace{6cm}(*)$$ comparing degrees, we get
$y_{l_{1}}x_{i_{1}}=0$. Let $j_{0}>1$ and suppose, inductively, that
for all $j'<j_{0}$, $y_{l_{1}}^{j'}x_{i_{j'}}=0$. Then using $(*)$
we get

$$\sum_{j=1}^{n} \sum_{k=1}^{m}
y_{l_{k}}y_{l_{1}}^{j_{0}-1}x_{i_{j}}=0.$$ Again, comparing degrees,
we have $y_{l_{1}}^{j_{0}}x_{i_{j_{0}}}=0$. So, $y_{l_{1}}\in
 \mathop{\cap}_{j=1}^k r(Ann (x_{i_{j}})) $.

Now, let $y=y_{l_{1}}+\cdots+y_{l_{m}}\in r(Ann(x))$ such that
 $l_{1}<l_{2}<\ldots <l_{m}$ and $y_{l_{j}}\in R_{l_{j}}$ for all $j=1,\cdots, m$. Then, there exists $s\in \mathbb{N}_{0}$ such
that $y^{s}x=0$.

 In order to show that $y\in \mathop{\cap}_{j=1}^k
r(Ann (x_{i_{j}}))$ we proceed by induction on $m$.
 The result is
clear in the case $m=1$. Now, suppose inductively that $m>1$ and the
result has been proved for values less than $m$. Using the above
agreement, we know that $y_{l_{1}}\in
\overset{k}{\underset{j=1}{\cap}}r(Ann (x_{i_{j}}))\subseteq
r(Ann(x))$. Then $y_{l_{2}}+\cdots+y_{l_{m}}=y-y_{l_{1}}\in
r(Ann(x))$. By inductive hypothesis, $y_{l_{2}}+\cdots+y_{l_{m}}\in
\overset{k}{\underset{j=1}{\cap}}r(Ann (x_{i_{j}}))$ and so, $y\in
\overset{k}{\underset{j=1}{\cap}}r(Ann (x_{i_{j}}))$.

\end{proof}
\begin{lem}
 $\Gamma _{I,J}(M)$ is a graded $R$-module.
\end{lem}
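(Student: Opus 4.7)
The plan is to show that every $x \in \G_{I,J}(M)$ has all its homogeneous components back in $\G_{I,J}(M)$; this yields $\G_{I,J}(M) = \bigoplus_{n \in \mathbb{Z}} \bigl(\G_{I,J}(M) \cap M_n\bigr)$, which is exactly the lemma. The strategy is to rephrase membership in $\G_{I,J}(M)$ as a purely ideal-theoretic condition on $J+\Ann(x)$ and then transport the grading information provided by Lemma \ref{ann}.

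First I would record the elementary equivalences (for arbitrary $y \in M$)
\[ I^n y \subseteq Jy \text{ for some } n \iff I^n \subseteq J + \Ann(y) \text{ for some } n \iff I \subseteq r(J + \Ann(y)), \]
where $r(-)$ denotes the radical. For the middle step, $ay \in Jy$ means $ay = jy$ with $j \in J$, hence $a-j \in \Ann(y)$ and $a \in J + \Ann(y)$; the reverse substitution is immediate. The last equivalence uses that $I$ is finitely generated (since $R$ is Noetherian). Thus $y \in \G_{I,J}(M)$ iff $I \subseteq r(J + \Ann(y))$.

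The main step is then to prove, for $x = x_{i_1}+\cdots+x_{i_k}$ with $x_{i_j} \in M_{i_j}$, the inclusion $r(J+\Ann(x)) \subseteq r(J+\Ann(x_{i_j}))$ for every $j$. I would do this by passing to containing primes. A prime $\fp$ contains $J+\Ann(x)$ iff $\fp \supseteq J$ and $\fp \supseteq r(\Ann(x))$; by Lemma \ref{ann} this radical equals $\bigcap_{\ell=1}^{k} r(\Ann(x_{i_\ell}))$, and since $\fp$ is prime it contains this finite intersection iff $\fp \supseteq r(\Ann(x_{i_\ell}))$ for some $\ell$, equivalently $\fp \supseteq J + \Ann(x_{i_\ell})$. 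Hence the set of primes containing $J+\Ann(x)$ is the union over $\ell$ of the sets of primes containing $J+\Ann(x_{i_\ell})$, and intersecting gives $r(J+\Ann(x)) = \bigcap_{\ell} r(J+\Ann(x_{i_\ell}))$, from which the desired inclusion is immediate.

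Combining, $x \in \G_{I,J}(M)$ forces $I \subseteq r(J+\Ann(x_{i_j}))$ for every $j$, and finite generation of $I$ then yields some power $I^{N_j} \subseteq J + \Ann(x_{i_j})$, i.e., $x_{i_j} \in \G_{I,J}(M)$. The only delicate point is the identification of the variety of $J + \Ann(x)$ in terms of the varieties of the $J + \Ann(x_{i_\ell})$, which is exactly where Lemma \ref{ann} is used; the remaining verifications are routine.
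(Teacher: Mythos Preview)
Your proof is correct. Both your argument and the paper's hinge on Lemma~\ref{ann} and on the Noetherian hypothesis, but the packaging differs. The paper works directly with explicit powers: it fixes $t'$ with $(r(\Ann x_{i_j}))^{t'}\subseteq \Ann(x_{i_j})$ for every $j$, and from $I^{n}\subseteq \Ann(x)+J$ computes the chain
\[
I^{2nt'}\subseteq (\Ann(x)+J)^{2t'}\subseteq (\Ann x)^{t'}+J^{t'}\subseteq \Bigl(\bigcap_{j} r(\Ann x_{i_j})\Bigr)^{t'}+J\subseteq \Ann(x_{i_j})+J.
\]
You instead recast membership in $\Gamma_{I,J}(M)$ as the radical condition $I\subseteq r(J+\Ann(y))$ and then prove the identity $r(J+\Ann(x))=\bigcap_{\ell} r(J+\Ann(x_{i_\ell}))$ by comparing varieties, using that a prime containing the finite intersection $\bigcap_\ell r(\Ann x_{i_\ell})$ must contain one of the factors. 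Your route is a bit more conceptual and yields that radical identity as a byproduct; the paper's route is a single explicit inclusion chain once $t'$ is chosen. Both are equally short, and neither avoids the essential input from Lemma~\ref{ann}.
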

\begin{proof}
Let $x\in \Gamma _{I,J}(M)$. Assume that
$x=x_{i_{1}}+\cdots+x_{i_{k}}$ where for all $j=1,2,\cdots,k$,
$x_{i_{j}}\in M_{i_{j}}$ and $i_{1}<i_{2}<\cdots<i_{k}$. We show
that $x_{i_{1}},\cdots,x_{i_{k}}\in \Gamma _{I,J}(M)$. Since $R$ is
Noetherian, there is $t'\in \mathbb{N}$ such that $(r(Ann
x_{i_{j}}))^{t'}\subseteq Ann(x_{i_{j}})$ for all $j=1,2,\cdots,k$.
Let $n\in \mathbb{N}_{0}$ be such that $I^{n}\subseteq Ann(x)+J$.
So, by Lemma \ref{ann}, for all $j=1,2,\cdots,k$ we have
$$I^{2nt'}\subseteq
(Ann(x)+J)^{2t'}\subseteq(Ann(x))^{t'}+J^{t'}\subseteq
(r(Ann(x)))^{t'}+J^{t'}=({\cap}_{j=1}^k r(Ann
(x_{i_{j}})))^{t'}+J^{t'}$$$$\subseteq Ann(x_{i_{j}})+J.$$

Thus $x_{i_{j}}\in \Gamma _{I,J}(M)$, as required.
\end{proof}
To calculate graded local cohomology module $^\ast~H^{i}_{I,J}(M)$
(${i\in \mathbb{N}_{0}}$), one proceeds as follows:

Taking an $^\ast$injective resolution

$$E^{\bullet}:  0\rightarrow
E^{0}\xrightarrow {d^{0}}E^{1}\xrightarrow {d^{1}}E^{2}\rightarrow
\cdots\rightarrow E^{i}\xrightarrow {d^{i}} E^{i+1}\rightarrow \cdots,$$

 of
$M$ in $^\ast~\mathcal{C}$, applying the functor $\Gamma _{I,J}(-)$
to it and  taking the i-th cohomology module of this complex, we get
$$\frac{ker\Gamma _{I,J}(d^{i}) }{im\Gamma
_{I,J}(d^{i-1})}$$ which is denoted by $^\ast~{H^{i}_{I,J}(M)}$ and
is a graded $R$-module.


\begin{rem}

Let $0 \rightarrow L\xrightarrow{f}M\xrightarrow{g} N\rightarrow 0$
be an exact sequence of $R$-modules and $R$-homomorphisms. Then, for
each $i \in \mathbb{N}_{0}$, there is a homogeneous connecting
homomorphism

$^\ast~{H^{i}_{I,J}(N)}\rightarrow^\ast~{H^{i+1}_{I,J}(L)}$ and
these connecting homomorphisms make the resulting homogenous long
exact sequence

 $$\begin{array}{lll}0\rightarrow
^\ast~H^{0}_{I,J}(L)&\xrightarrow{^\ast~H^{0}_{I,J}(f)} ^\ast~H^{0}_{I,J}(M)&\xrightarrow
{^\ast~H^{0}_{I,J}(g)}^\ast~H^{0}_{I,J}(N)
\\\rightarrow^\ast~H^{1}_{I,J}(L)&\xrightarrow{^\ast~H^{1}_{I,J}(f)} ^\ast~H^{1}_{I,J}(M)&\xrightarrow{^\ast~H^{1}_{I,J}(g)}^\ast~H^{1}_{I,J}(N)
\\\rightarrow\cdots
\\\rightarrow^\ast~H^{i}_{I,J}(L)&\xrightarrow{^\ast~H^{i}_{I,J}(f)}^\ast~H^{i}_{I,J}(M)&\xrightarrow{^\ast~H^{i}_{I,J}(g)}^\ast~H^{i}_{I,J}(N)
\\\rightarrow^\ast~H^{i+1}_{I,J}(L)\rightarrow \cdots
.\end{array}$$

The reader should also be aware of the 'natural' or 'functorial'
properties of these long exact sequences.

\end{rem}

\begin{defn}\label{sg}
We define a partial order on the set
\[^\ast~\widetilde{W}(I,J):=\{\fa: \fa \mbox{  is a homogenous ideal of } R; I^{n}\subseteq
\fa+J \mbox{ for some integer } n\geq 1\},\]

 by letting
$\fa\leq \fb$ if $\fa\supseteq \fb$, for $\fa,\fb\in
^\ast~\widetilde{W}(I,J)$. If $\fa\leq \fb$, then we have $\Gamma
_{\fa}(M)\subseteq \Gamma _{\fb}(M)$. Therefore, the relation $\leq$
on $^\ast~\widetilde{W}(I,J)$ with together the inclusion maps make
$\{\Gamma _{\fa}(M)\}_{\fa\in ^\ast~\widetilde{W}(I,J)}$ into a
direct system of graded R-modules.
\end{defn}

As Takahashi et. all in \cite{TAK} showed the relation between the
local cohomology functor $H^{i}_{I}(-)$ and $H^{i}_{I,J}(-)$, we
show the same relation between graded version of them as follows.

\begin{prop}\label{lim}

Let $M$ be a graded $R$-module. Then there is a natural graded
isomorphism

$$\big(^\ast~H^{i}_{I,J}(M)\big)_{i\in \mathbb{N}_0}\cong \big(\underset{\fa\in
^\ast~\widetilde{W}(I,J)}{\varinjlim}^\ast~H^{i}_{\fa}(M)\big)_{i\in
\mathbb{N}_0},$$ of strongly connected sequences of covariant
functors.

\end{prop}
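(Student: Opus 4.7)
The plan is to establish the isomorphism at $i=0$ and then bootstrap to all $i\geq 0$ by means of an $^\ast$injective resolution, invoking the exactness of filtered colimits in $^\ast\mC$.

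First I would check that $^\ast~\widetilde{W}(I,J)$ is directed under $\leq$: given $\fa_{1},\fa_{2}$ with $I^{n_{j}}\subseteq \fa_{j}+J$, one has $I^{n_{1}+n_{2}}\subseteq(\fa_{1}+J)(\fa_{2}+J)\subseteq \fa_{1}\cap\fa_{2}+J$, so $\fa_{1}\cap \fa_{2}$ again lies in $^\ast~\widetilde{W}(I,J)$ and dominates both under $\leq$. Hence $\{\Gamma_{\fa}(M)\}_{\fa}$ is a genuine direct system of graded submodules of $M$ whose colimit is $\bigcup_{\fa}\Gamma_{\fa}(M)\subseteq M$.

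For the degree-zero isomorphism I would show $\Gamma_{I,J}(M)=\bigcup_{\fa\in {^\ast~\widetilde{W}(I,J)}}\Gamma_{\fa}(M)$ as graded submodules of $M$. The inclusion $\supseteq$ is immediate: $\fa^{s}x=0$ and $I^{k}\subseteq \fa+J$ together give $I^{sk}x\subseteq(\fa^{s}+J)x\subseteq Jx$. For $\subseteq$, the preceding lemma, which shows $\Gamma_{I,J}(M)$ is graded, lets me reduce to a homogeneous $x\in \Gamma_{I,J}(M)$; then $\Ann(x)$ is a \emph{homogeneous} ideal, and $I^{n}x\subseteq Jx$ forces $I^{n}\subseteq \Ann(x)+J$, so $\Ann(x)\in {^\ast~\widetilde{W}(I,J)}$ and $x\in \Gamma_{\Ann(x)}(M)$. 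This identification is manifestly natural in $M$.

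To pass to higher $i$, I would pick an $^\ast$injective resolution $E^{\bullet}$ of $M$ in $^\ast\mC$. Applying the degree-zero identification termwise yields an isomorphism of complexes $\Gamma_{I,J}(E^{\bullet})\cong \varinjlim_{\fa}\Gamma_{\fa}(E^{\bullet})$ in $^\ast\mC$. Because $^\ast\mC$ is a Grothendieck category, filtered colimits are exact and therefore commute with cohomology, giving
\begin{align*}
{}^\ast H^{i}_{I,J}(M) &= H^{i}(\Gamma_{I,J}(E^{\bullet})) \\
&\cong H^{i}\bigl(\varinjlim_{\fa}\Gamma_{\fa}(E^{\bullet})\bigr) \\
&\cong \varinjlim_{\fa}H^{i}(\Gamma_{\fa}(E^{\bullet})) \\
&= \varinjlim_{\fa}{}^\ast H^{i}_{\fa}(M).
\end{align*}
That these isomorphisms intertwine the connecting homomorphisms of any short exact sequence is standard: both connecting maps arise from the snake lemma, which is compatible with filtered colimits, so the whole family lifts to an isomorphism of strongly connected sequences of functors.

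I expect the main obstacle to lie in the degree-zero step, where it is essential that $\Ann(x)$ be homogeneous when $x$ is: without the graded structure lemma established above, the witness ideal produced by the ungraded argument in \cite{TAK} need not belong to $^\ast~\widetilde{W}(I,J)$, and the homogeneous direct system would fail to see all torsion elements. Once that point is in place, the higher-degree extension and the compatibility of connecting maps reduce to general features of Grothendieck categories.
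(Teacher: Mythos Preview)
Your proposal is correct and follows the same overall strategy as the paper: establish the equality $\Gamma_{I,J}(M)=\bigcup_{\fa\in{}^\ast\widetilde{W}(I,J)}\Gamma_{\fa}(M)$ and then extend to higher degrees. The only tactical differences are minor: (i) for the inclusion $\subseteq$ the paper does not reduce to homogeneous $x$ via Lemma~2.2 but instead takes $\fa=r(\Ann(x))$ for an \emph{arbitrary} $x\in\Gamma_{I,J}(M)$ and invokes Lemma~\ref{ann} to see that this radical is homogeneous; and (ii) for the passage to higher $i$ the paper simply cites \cite[Exercise~12.1.7]{B-SH} rather than spelling out the injective-resolution and filtered-colimit argument as you do. Both routes yield the same result with the same dependencies.
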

\begin{proof}
First of all, we show that $\Gamma _{I,J}(M)=\underset{\fa\in
^\ast~\widetilde{W}(I,J)}{\bigcup \Gamma _{\fa}(M)}$.

$\supseteq$: Suppose that $x\in \underset{\fa\in
^\ast~\widetilde{W}(I,J)}{\bigcup \Gamma _{\fa}(M)}$. Then there are
$\fa\in ^\ast~\widetilde{W}(I,J)$ and integer $n$ such that
$I^{n}\subseteq \fa+J$ and $x\in \Gamma _{\fa}(M)$. Let $t\in
\mathbb{N}_{0}$ be such that $\fa^{t}\subseteq Ann(x)$. Therefore
$I^{2nt}\subseteq (\fa+J)^{2t}\subseteq \fa^{t}+J^{t}\subseteq
Ann(x)+J$ and so $x \in \Gamma _{I,J}(M)$.

$\subseteq$: Conversely, let $x\in \Gamma _{I,J}(M)$. Then
$I^{n}\subseteq Ann(x)+J$ for some $n\in \mathbb{N}$. We show that
$x\in \Gamma _{\fa}(M)$ such that $\fa=r(Ann(x))$. As $r(Ann(x))$ is
homogenous by Lemma \ref{ann}, and $I^{n}\subseteq Ann(x)+J$, we
have $r(Ann(x))\in ^\ast~\widetilde{W}(I,J)$ and $x\in \Gamma
_{r(Ann(x))}(M)$.

Now,   \cite[Exercise 12.1.7]{B-SH} implies the desired isomorpism.

 \end{proof}

\begin{rem}
If we forget the grading on $^\ast~H^{i}_{I,J}(M)$, the resulting
$R$-module is isomorphic to $H^{i}_{I,J}(M)$. More precisely, using
\cite[Proposition 12.1.3]{B-SH} and the fact that the direct systems
$\widetilde{W}(I,J)$ and $^\ast~\widetilde{W}(I,J)$ are cofinal we
have
$$H^{i}_{I,J}(E)\cong \underset{\fa\in
\widetilde{W}(I,J)}{\varinjlim}H^{i}_{\fa}(E)\cong \underset{\fa\in
^\ast~\widetilde{W}(I,J)}{\varinjlim}H^{i}_{\fa}(E)=0,$$ for all
$i>0$ and all  $^\ast$injective $R$-module $E$. Now, using similar
argument as used in \cite[Corollary 12.3.3]{B-SH}, one can see that
there exists an equivalent of functors

$$H^{i}_{I,J}(-\rceil {_{^\ast\mathcal{C}}})\cong^\ast~H^{i}_{I,J}(-)$$

from $^\ast~\mathcal{C}$ to itself.
\end{rem}

As a consequence of the above remark and \cite[Remark
13.1.9(ii)]{B-SH}, we have the following.

\begin{cor}
Let $t\in \mathbb{Z}$,  then
$$ H^{i}_{I,J}(M(t))\cong (H^{i}_{I,J}(M))(t),$$
for all $i\in \mathbb{N}$,  where $(.)(t):^\ast~\mathcal{C}\rightarrow
^\ast~\mathcal{C}$ is the t-th shift functor.
\end{cor}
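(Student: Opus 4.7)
My plan is to bootstrap from the ordinary graded local cohomology case, where the analogous statement ${}^{\ast} H^{i}_{\fa}(N(t)) \cong ({}^{\ast} H^{i}_{\fa}(N))(t)$ is the content of \cite[Remark 13.1.9(ii)]{B-SH}, and to transfer this to the $H^{i}_{I,J}$ setting via the direct-limit presentation of Proposition \ref{lim}. The three ingredients I need are: (a) the shift functor $(-)(t) \colon {}^{\ast} \mathcal{C} \to {}^{\ast} \mathcal{C}$ is exact; (b) $(-)(t)$ commutes with arbitrary direct limits; and (c) the index set ${}^{\ast} \widetilde{W}(I,J)$ of Definition \ref{sg} depends only on the ideals $I$ and $J$, not on the module, so that the family of isomorphisms from \cite[Remark 13.1.9(ii)]{B-SH} is a morphism of direct systems over the same index set.

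Concretely, for each $\fa \in {}^{\ast} \widetilde{W}(I,J)$ the cited Remark supplies a natural isomorphism
\[ \varphi_{\fa} \colon {}^{\ast} H^{i}_{\fa}(M(t)) \xrightarrow{\sim} ({}^{\ast} H^{i}_{\fa}(M))(t), \]
and naturality guarantees compatibility with the transition maps induced by inclusions $\fa \supseteq \fb$. Passing to the direct limit and using (b) to pull the shift outside $\varinjlim$, Proposition \ref{lim} then gives
\[ {}^{\ast} H^{i}_{I,J}(M(t)) \cong \varinjlim_{\fa} {}^{\ast} H^{i}_{\fa}(M(t)) \cong \varinjlim_{\fa} \bigl({}^{\ast} H^{i}_{\fa}(M)\bigr)(t) \cong \Bigl( \varinjlim_{\fa} {}^{\ast} H^{i}_{\fa}(M) \Bigr)(t) \cong \bigl( {}^{\ast} H^{i}_{I,J}(M) \bigr)(t). \]
Finally, invoking the equivalence $H^{i}_{I,J}(-)|_{{}^{\ast} \mathcal{C}} \cong {}^{\ast} H^{i}_{I,J}(-)$ from the Remark preceding the statement yields the desired isomorphism $H^{i}_{I,J}(M(t)) \cong (H^{i}_{I,J}(M))(t)$.

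I do not expect a serious obstacle here. The step most at risk of a careless slip is the naturality of $\varphi_{\fa}$ in $\fa$, which is needed to make sense of the direct-limit comparison; this, however, is built into the functoriality assertion of \cite[Remark 13.1.9(ii)]{B-SH}. A viable alternative would be a direct proof via a ${}^{\ast}$injective resolution $E^{\bullet}$ of $M$: the shifted complex $E^{\bullet}(t)$ is a ${}^{\ast}$injective resolution of $M(t)$, the torsion functor satisfies $\Gamma_{I,J}(N(t)) = \Gamma_{I,J}(N)(t)$ for every graded $N$ (since the condition is purely module-theoretic), and taking cohomology commutes with the exact shift. The direct-limit route is shorter because it simply piggybacks on the preceding proposition.
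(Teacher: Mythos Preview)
Your proposal is correct, and the ingredients you cite---the preceding Remark and \cite[Remark 13.1.9(ii)]{B-SH}---are exactly the two facts the paper invokes for this corollary. The paper gives no further argument beyond that one-line citation, and the most natural reading of it is your ``alternative'' route: since $\Gamma_{I,J}(N(t)) = \Gamma_{I,J}(N)(t)$ and a shifted ${}^{\ast}$injective resolution of $M$ is a ${}^{\ast}$injective resolution of $M(t)$, the argument of \cite[Remark 13.1.9(ii)]{B-SH} carries over verbatim; the direct-limit detour through Proposition~\ref{lim} is valid but not needed.
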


\section{vanishing and finiteness of components}

A crucial role in the study of the graded local cohomology is
vanishing and finiteness of their components. As one can see in
Theorem 15.1.5 \cite{B-SH}, $H^{i}_{R_{+}}(M)_{n}$ is a finite
$R_{0}$-module for all $n\in \mathbb{Z}$ and it vanishes for all
$n\gg0$. In this section we show that, although it is not the same
for $H^{i}_{R_{+},J}(M)$, but it holds in some special cases.

In the rest of this paper, we assume that $R=\bigoplus_{n \in
\mathbb{N}_{0}} R_{n}$ is a standard graded ring and $M$ is a finite
graded $R$-module.

 Local cohomology with respect to a pair of ideals
does  not satisfy in Theorem 15.1.5 \cite{B-SH}, in general, as the
following counterexample shows.

\begin{rem}

\noindent(i) Let $R=\mathbb{Z}[X]$ and $R _{+}=(X)$. We can see that
$\Gamma _{R _{+},R
 _{+}}(\mathbb{Z}[X])_{n}=\mathbb{Z}[X]_{n}\neq 0$   for all $n\in \mathbb{N}_{0}$.

(ii) Assume that $J$ is an ideal of $R$ generated by elements of
degree zero such that $JR_{+}=0$. It is easy to see that in this
condition $\Gamma _{R_{+},J}(M)=\Gamma _{R_{+}}(M)$ and therefore,
\cite[Theorem 15.1.5]{B-SH} holds for $H^{i}_{R_{+},J}(M)$.

(iii) Let $(R_{0},\fm_{0})$ be a local ring and $\dim R_{0}=0$. Then
$\Gamma_{R_{+},\fm_{0}R}(M)=\Gamma_{R_{+}}(M)$ and, again,
\cite[Theorem 15.1.5]{B-SH} holds for $H^{i}_{R_{+},\fm_{0}R}(M)$.

\end{rem}

The following proposition, indicates a vanishing property on the
graded components of $ H^{i}_{\fb,\fm_{0}R}(M)$ for ideal
$\fb=\fb_{0}+R_{+}$ where $\fb_{0}$ is an ideal of $R_{0}$ and
$\fm_{0}$ is the unique maximal ideal of $R_0$. Vanishing of the
components $ H^{i}_{\fb }(M)_n$ for $n\gg 0$ has already been
studied in \cite{JZ}.

\begin{thm}\label{m01}

Assume that $(R_{0},\fm_{0})$ is local and $i\in \mathbb{N}_0$. Let
$\fb:=\fb_{0}+R_{+}$ where $\fb_{0}$ is an ideal of $R_{0}$ such
that for all finite graded $R$-module $M$, $\bigcap \limits
_{k=0}^\infty \fm_{0}^{k}H^{i}_{\fb,\fm_{0}R}(M)_{n}=0$ for all
$n\gg0$. Then $ H^{i}_{\fb,\fm_{0}R}(M)_{n}=0 $ for all $n\gg 0$ and
 all finite graded $R$-module $M$.

\end{thm}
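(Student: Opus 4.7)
My plan is to induct on $\dim M$, using the intersection hypothesis only at the very last step.

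First I would establish the following key lemma: if $N$ is a finite graded $R$-module with $\fm_0^{k}N=0$ for some $k\geq 1$, then $H^{i}_{\fb,\fm_0 R}(N)_{n}=0$ for $n\gg 0$. The idea is to filter $N$ along $N\supseteq \fm_0 N\supseteq\cdots\supseteq \fm_0^{k}N=0$ and observe that each successive quotient $\fm_0^{j}N/\fm_0^{j+1}N$ is a finite graded module over the standard graded ring $R/\fm_0 R$, whose base $R_0/\fm_0$ is a field. Via Proposition~\ref{lim} together with a change-of-rings argument (in $R/\fm_0 R$ the image of $\fm_0 R$ is zero, so the pair torsion functor collapses to the ordinary one), $H^{i}_{\fb,\fm_0 R}$ agrees on each quotient with $H^{i}_{\bar\fb}$ over $R/\fm_0 R$, where $\bar\fb$ is the image of $\fb$. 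Either $\bar\fb=(R/\fm_0 R)_{+}$ (if $\fb_0\subseteq\fm_0$) or $\bar\fb=R/\fm_0 R$ (otherwise), so \cite[Theorem~15.1.5]{B-SH} gives vanishing of components in high degrees. Ascending the filtration via the long exact sequences of pair local cohomology then yields the key lemma. In particular, when $\dim M\leq 0$ we have $\fm^{t}M=0$ for the graded maximal ideal $\fm=\fm_0+R_+$, hence $\fm_0^{t}M=0$, which settles the base case of the induction.

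For the inductive step, set $d=\dim M>0$. The submodule $\Gamma_{\fm_0 R}(M)$ is finitely generated and annihilated by a power of $\fm_0$, so the key lemma together with the long exact sequence of the pair local cohomology functor reduces the problem to $\bar M:=M/\Gamma_{\fm_0 R}(M)$. By construction $\Gamma_{\fm_0 R}(\bar M)=0$, so no $\fp\in\Ass_R\bar M$ contains $\fm_0$; prime avoidance therefore yields an element $a\in\fm_0$ regular on $\bar M$, and $\dim(\bar M/a\bar M)=d-1$. The inductive hypothesis applied to $\bar M/a\bar M$ gives $H^{i}_{\fb,\fm_0 R}(\bar M/a\bar M)_{n}=0$ for $n\gg 0$. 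The long exact sequence of $0\to\bar M\xrightarrow{\cdot a}\bar M\to\bar M/a\bar M\to 0$ then shows that multiplication by $a$ acts surjectively on $H^{i}_{\fb,\fm_0 R}(\bar M)_{n}$ in high degrees, that is, $a\cdot H^{i}_{\fb,\fm_0 R}(\bar M)_{n}=H^{i}_{\fb,\fm_0 R}(\bar M)_{n}$. Iterating this $R_0$-linear identity,
\[
H^{i}_{\fb,\fm_0 R}(\bar M)_{n}=a^{k}\,H^{i}_{\fb,\fm_0 R}(\bar M)_{n}\subseteq \fm_0^{k}\,H^{i}_{\fb,\fm_0 R}(\bar M)_{n}
\]
for every $k\geq 1$, so applying the intersection hypothesis to $\bar M$ forces $H^{i}_{\fb,\fm_0 R}(\bar M)_{n}\subseteq \bigcap_{k}\fm_0^{k}H^{i}_{\fb,\fm_0 R}(\bar M)_{n}=0$ in high degrees, which gives the conclusion for $M$ as well.

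The main technical point I expect to require care is the key lemma: one has to verify cleanly the identification $H^{i}_{\fb,\fm_0 R}\cong H^{i}_{\bar\fb}$ on modules killed by $\fm_0$, and use \cite[Theorem~15.1.5]{B-SH} in the version where the graded ideal $\bar\fb$ merely contains the irrelevant ideal rather than equalling it. Everything else is a comfortable depth-style reduction using the $\Gamma_{\fm_0 R}$-torsion short exact sequence, prime avoidance, and the Nakayama-type iteration above; the hypothesis is used exactly once, at the final intersection step, which is why it is crucial that it be assumed for every finite graded $R$-module (so that it is available for $\bar M$, not just $M$).
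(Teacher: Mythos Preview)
Your argument is correct and follows the same inductive skeleton as the paper: reduce modulo the $\fm_0 R$-torsion submodule, choose a nonzerodivisor $a\in\fm_0$, apply the inductive hypothesis to $\bar M/a\bar M$, and finish via the intersection hypothesis. The only difference is that the paper handles the base case and the vanishing of $H^{\bullet}_{\fb,\fm_0 R}(\Gamma_{\fm_0 R}(M))_n$ for $n\gg 0$ by citing \cite[Theorem~1]{KR} and \cite[Proposition~1.1]{JZ}, whereas your self-contained key lemma (filtration by powers of $\fm_0$ and passage to $R/\fm_0 R$) amounts to a direct proof of those cited facts; note that you implicitly need the key lemma in degree $i{+}1$ as well for the long exact sequence reduction, but your proof of it works in every cohomological degree.
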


\begin{proof}

We proceed by induction on $\dim M$.

 Let $J:=\fm_{0}R$. If $\dim
M=0$, then using \cite[Theorem 1]{KR} $\Gamma_{\fb,J}(M)_{n}= M_n=
0$ for all $n\gg 0$.

 Now, let $\dim M>0$. Considering the long exact sequence
$$H^{i}_{\fb,J}(\Gamma_{J}(M))_{n}\rightarrow H^{i}_{\fb,J}(M)_{n}\rightarrow
H^{i}_{\fb,J}(\overline{M})_{n}\rightarrow
H^{i+1}_{\fb,J}(\Gamma_{J}(M))_{n},$$ where
$\overline{M}=M/\Gamma_{J}M$, by \cite[Proposition 1.1]{JZ} we get
$H^{i}_{\fb,J}(M)_{n}\cong H^{i}_{\fb,J}(\overline{M})_{n}$ for all
  $n\gg 0$. Therefore, we may assume
that $M$ is $J$-torsion free and so   there exists
$x_{0}\in\fm_{0}\backslash Z_{R_{0}}(M)$. Now, the exact sequence
$$0\rightarrow M\xrightarrow{x_{0}} M\rightarrow M/x_{0
}M\rightarrow 0$$ implies the   exact sequence
$$
H^{i}_{\fb,J}(M)_{n}\xrightarrow{x_{0}}
H^{i}_{\fb,J}(M)_{n}\rightarrow H^{i}_{\fb,J}(M/x_{0}M)_{n}.$$

Then, by the assumptions and the inductive hypothesis,
$$H^{i}_{\fb,J}(M/x_{0}M)_{n}=0$$ for all   $n\gg0$.
So,
$$H^{i}_{\fb,J}(M)_{n}= x_{0}H^{i}_{\fb,J}(M)_{n}$$ for all
$n\gg0$. Therefore,
$$H^{i}_{\fb,J}(M)_{n}= \bigcap \limits _{k=0}^\infty
x_{0}^{k}H^{i}_{\fb,J}(M)_{n}= 0$$ for all $n\gg0$. Now, the result
follows by induction.

\end{proof}

In the following we present an equivalent condition for the
finiteness of components $H^{i}_{R _{+},J}(M)_{n}$.

\begin{thm}\label{m0}

Let $(R_{0},\fm_{0})$ be   local  and $J_{0}\subseteq\fm_{0}$  be an
ideal of $R_0$. Then the following statements are equivalent.
\begin{enumerate}
\item[a)] For all finite graded $R$-module  $M$ and all  $i \in \mathbb{N}_{0}$, $H^{i}_{R _{+},J_{0}R}(M)_{n} = 0$ for  $n\gg0$.
\item[b)] For all finite graded $R$-module  $M$, all $i \in \mathbb{N}_{0}$ and $n\in\mathbb{Z}$, $H^{i}_{R _{+},J_{0}R}(M)_{n}$ is a finite $R_{0}$-module.
\end{enumerate}
\end{thm}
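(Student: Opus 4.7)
The plan is to prove each direction separately, using Theorem \ref{m01} for one and a descending induction on the grading for the other.

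\emph{Direction (b)$\Rightarrow$(a).} Under (b), each $H^{i}_{R_{+},J_{0}R}(M)_{n}$ is a finitely generated module over the Noetherian local ring $(R_{0},\fm_{0})$, so Krull's intersection theorem gives
\[\bigcap_{k\geq 0}\fm_{0}^{k}H^{i}_{R_{+},J_{0}R}(M)_{n}=0\]
for every $n\in\mathbb{Z}$. This is precisely the vanishing hypothesis of Theorem \ref{m01}, except that the second ideal there is $\fm_{0}R$ rather than $J_{0}R$. I would verify that the proof of Theorem \ref{m01} carries over verbatim with $J_{0}R$ in place of $\fm_{0}R$: the only step needing inspection is the reduction to the torsion-free case, but since $J_{0}\subseteq\fm_{0}$, $J_{0}R$-torsion freeness of $M$ still rules out $\fm_{0}\in\Ass_{R_{0}}(M)$ (any element annihilated by $\fm_{0}$ would a fortiori be annihilated by $J_{0}R$), so a non-zerodivisor $x_{0}\in\fm_{0}$ on $M$ exists and the induction on $\dim M$ proceeds unchanged.

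\emph{Direction (a)$\Rightarrow$(b).} I plan to induct on $(i,\dim M)$. For $i=0$, $H^{0}_{R_{+},J_{0}R}(M)_{n}=\Gamma_{R_{+},J_{0}R}(M)_{n}\subseteq M_{n}$ is a submodule of the finitely generated $R_{0}$-module $M_{n}$, hence is itself finitely generated. For $\dim M=0$ one has $M_{n}=0$ for $n\gg 0$, reducing to finitely many components, each a submodule of $M_{n}$. For the inductive step, first use the short exact sequence $0\to\Gamma_{R_{+}}(M)\to M\to\overline{M}\to 0$ to reduce to the case where $\overline{M}$ is $R_{+}$-torsion free; the contribution of $\Gamma_{R_{+}}(M)$ must be handled separately (see below). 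Then choose an $\overline{M}$-regular element $x\in R_{1}$ and apply the long exact sequence in degree $n$ coming from $0\to\overline{M}(-1)\xrightarrow{x}\overline{M}\to\overline{M}/x\overline{M}\to 0$:
\[H^{i-1}_{R_{+},J_{0}R}(\overline{M}/x\overline{M})_{n}\to H^{i}_{R_{+},J_{0}R}(\overline{M})_{n-1}\xrightarrow{x} H^{i}_{R_{+},J_{0}R}(\overline{M})_{n}\to H^{i}_{R_{+},J_{0}R}(\overline{M}/x\overline{M})_{n}.\]
Since $\dim(\overline{M}/x\overline{M})<\dim\overline{M}$, the inductive hypothesis forces the outer terms to be finite over $R_{0}$, so the kernel and cokernel of multiplication by $x$ on $H^{i}_{R_{+},J_{0}R}(\overline{M})_{\bullet}$ are finite. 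Hypothesis (a) supplies $H^{i}_{R_{+},J_{0}R}(\overline{M})_{n}=0$ for $n\gg 0$, and this is the base case of a descending induction on $n$: if $H^{i}_{R_{+},J_{0}R}(\overline{M})_{n}$ is finite over $R_{0}$, then the above sequence exhibits $H^{i}_{R_{+},J_{0}R}(\overline{M})_{n-1}$ as an extension of a finite $R_{0}$-submodule of $H^{i}_{R_{+},J_{0}R}(\overline{M})_{n}$ by the finite kernel, hence itself finite.

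\emph{Main obstacle.} The delicate point is the reduction step $M\leadsto\overline{M}$: one must check that the graded components of $H^{i}_{R_{+},J_{0}R}(N)$ remain finite over $R_{0}$ when $N$ is a finitely generated, $R_{+}$-torsion graded module (such as $N=\Gamma_{R_{+}}(M)$). Since such an $N$ is concentrated in finitely many degrees and each $N_{n}$ is a finite $R_{0}$-module, the idea would be to unwind the direct-limit description from Proposition \ref{lim} and carry out the computation via a Cech complex on a generating set of $R_{+}$; the cofinal subsystem of ideals $\fa\in{}^{\ast}\widetilde{W}(R_{+},J_{0}R)$ containing a power of $R_{+}$ then reduces the task to the classical finiteness result \cite[Theorem 15.1.5]{B-SH}.
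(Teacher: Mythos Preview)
Your argument is correct, and for (b)$\Rightarrow$(a) it matches the paper (which simply cites Theorem~\ref{m01}; you are right that the statement there is only for $\fm_{0}R$, and your verification that the proof goes through for $J_{0}R$ is a genuine addition).

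For (a)$\Rightarrow$(b), the paper takes a shorter route that dissolves your ``main obstacle'' entirely. Instead of passing to $M/\Gamma_{R_{+}}(M)$, the paper passes to $M/\Gamma_{R_{+},J}(M)$: since $\Gamma_{R_{+},J}(M)$ is $(R_{+},J)$-torsion it is acyclic for $H^{\bullet}_{R_{+},J}$, so $H^{i}_{R_{+},J}(M)\cong H^{i}_{R_{+},J}(M/\Gamma_{R_{+},J}(M))$ for all $i>0$ with no residual term to analyse. Moreover $(R_{+},J)$-torsion freeness already forces $R_{+}$-torsion freeness (because $\Gamma_{R_{+}}\subseteq\Gamma_{R_{+},J}$), so one still finds a homogeneous non-zerodivisor $x\in R_{+}$. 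The paper then inducts \emph{only on $i$}, using just the three-term piece
\[
H^{i-1}_{R_{+},J}(M/xM)_{n+t}\longrightarrow H^{i}_{R_{+},J}(M)_{n}\xrightarrow{\,x\,} H^{i}_{R_{+},J}(M)_{n+t},
\]
and descends from a degree where (a) gives vanishing; the induction on $\dim M$ and the right-hand $H^{i}$ term in your four-term sequence are not needed. Even within your own reduction the obstacle is much lighter than you suggest: if $N$ is $R_{+}$-torsion then $N=\Gamma_{R_{+}}(N)\subseteq\Gamma_{R_{+},J}(N)$, so $N$ is already $(R_{+},J)$-torsion and hence $H^{i}_{R_{+},J}(N)=0$ for all $i>0$ --- no \v{C}ech complex or cofinality argument is required.
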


\begin{proof}
Let $J=J_{0}R$.

  a)$\Rightarrow$ b) Let $M$ be a non-zero finite graded
$R$-module. We proceed by induction on $i$. It is clear that
$H^{0}_{R_{+},J}(M)$ is a finite $R$-module and then
$H^{0}_{R_{+},J}(M)_{n}$ is finite as an $R_{0}$-module for all
$n\in \mathbb{Z}$.

Now, suppose that $i>0$ and  the result is proved for smaller values
than $i$. As $H^{i}_{R_{+},J}(M)\cong H^{i}_{R_{+},J}(M/\Gamma
_{R_{+},J}(M))$, we may assume that $M$ is an $(R_{+},J)$-torsion
free $R$-module and so $R_{+}$-torsion free $R$-module. Hence
$R_{+}$ contains a non zero-divisor on $M$. As $M\neq R_{+}M$, there
exists a homogeneous element $x\in R_{+}$ of degree $t$, which is a
non zero-divisor on $M$,  by \cite[Lemma 15.1.4]{B-SH}. We use the
exact sequence $0\rightarrow M \xrightarrow {x}
M(t)\rightarrow(M/xM)(t)\rightarrow 0$ of graded $R$-modules and
homogeneous homomorphisms to obtain the exact sequence $$
H^{i-1}_{R_{+},J}(M/xM)_{n+t}\rightarrow
H^{i}_{R_{+},J}(M)_{n}\xrightarrow {x} H^{i}_{R_{+},J}(M)_{n+t}$$
for all $n\in \mathbb{Z}$. It follows from the inductive hypothesis
that $H^{i-1}_{R_{+},J}(M/xM)_{j}$ is a finite $R_{0}$-module for
all $j \in \mathbb{Z}$. Let $s\in \mathbb{Z}$ be such that
$H^{i}_{R_{+},J}(M)_{m}=0$ for all $m\geq s$. Fix an integer $n$,
then for some $k\in \mathbb{N}_{0}$ we get $n + kt \geq s$ and then
$H^{i}_{R_{+},J}(M)_{n+kt}=0$. Now, for all $j = 0,\cdots,k-1$, we
have the exact sequence
$$H^{i-1}_{R_{+},J}(M/xM)_{n+(j+1)t}\rightarrow
H^{i}_{R_{+},J}(M)_{n+jt}\xrightarrow {x}
H^{i}_{R_{+},J}(M)_{n+(j+1)t}.$$

Since $H^{i}_{R_{+},J}(M)_{n+kt}=0$ and
$H^{i-1}_{R_{+},J}(M/xM)_{n+kt}$ is a finite $R_{0}$-module, so
 $H^{i}_{R_{+},J}(M)_{n+(k-1)t}$ is a finite $R_{0}$-module.
Therefore $H^{i}_{R_{+},J}(M)_{n+jt}$ is a finite $R_{0}$-module for
$j= 0,\ldots, k- 1$. Now, the result follows by induction.

 b)$\Rightarrow$ a)  The result follows from the above theorem.

\end{proof}

\section{Asymptotic behavior of $H^{i}_{R_{+},J}(M)_{n}$ for $n\ll 0$}
In this section  we consider the asymptotic behavior of components
$H^{i}_{R_{+},J}(M)_{n}$ when $n\rightarrow -\infty$. More
precisely, first we study the asymptotic  stablity  of the set
\\$\{Ass_{R_{0}}(H^{i}_{R_{+},J}(M)_{n})\}_{n\in \mathbb{Z}}$ in a special case.
Then, we investigate the Artinianness and tameness of some quotients
and submodules of $H^{i}_{R_{+},J}(M)$.

Let us recall that for a given   sequence $\{S_{n}\}_{n\in\mathbb{Z}}$
of sets $S_{n}\subseteq Spec(R_{0})$, we say that  $\{S_{n}\}_{n\in \mathbb{Z}}$ is
asymptotically stable for $n\rightarrow -\infty$, if there is some
$n_0\in\mathbb{Z}$ such that $S_{n}=S_{n_{0}}$ for all $n\leq n_{0}$
(see \cite{B}). Let the base ring $R_{0}$ be local and $i \in
\mathbb{N}_{0}$ be such that the $R$-module $H^{j}_{R_{+}}(M)$ is
finite for all $j<i$. In \cite[Lemma 5.4]{B-H} it has been shown
that $\{\Ass_{R_{0}}(H^{i}_{R_{+}}(M)_{n})\}_{n\in \mathbb{Z}}$ is asymptotically stable
for $n\rightarrow -\infty$. The next theorem use similar argument to
improve this result to local cohomology modules defines by a pair of
ideals.

\begin{thm}\label{ass}
Let $(R_0,\fm_{0})$ be a local ring with infinite residue field and
$i\in\mathbb{N}_{0}$ be such that the $R$-module
$H^{j}_{R_{+},J}(M)$ is finite for all $j<i$.  If one of the
equivalent conditions of the Theorem \ref{m0} holds, then
$\Ass_{R_{0}}(H^{i}_{R_{+},J}(M)_{n})$ is asymptotically stable for
$n\rightarrow -\infty.$

\end{thm}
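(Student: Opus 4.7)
The plan is to prove the theorem by induction on $i$, modelling the argument on \cite[Lemma 5.4]{B-H}, with our Theorem \ref{m0} substituting for the classical result that the graded components of $H^{j}_{R_{+}}(M)$ are finite over $R_{0}$ and vanish for $n\gg 0$. The base case $i=0$ is immediate: $H^{0}_{R_{+},J}(M)=\Gamma_{R_{+},J}(M)$ is a submodule of $M$, hence a finite graded $R$-module whose components vanish for $n\ll 0$, so $\Ass_{R_{0}}(H^{0}_{R_{+},J}(M)_{n})=\emptyset$ eventually.

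For the inductive step, first I would replace $M$ by $M/\Gamma_{R_{+},J}(M)$; this is harmless because $\Gamma_{R_{+},J}(M)$ is a finite graded $R$-module with components zero for $n\ll 0$ and the long exact sequence yields $H^{i}_{R_{+},J}(M)\cong H^{i}_{R_{+},J}(M/\Gamma_{R_{+},J}(M))$ for $i\geq 1$. Hence we may assume $M$ is $(R_{+},J)$-torsion free, in particular $R_{+}$-torsion free. Since $R_{0}/\fm_{0}$ is infinite, homogeneous prime avoidance on $\Ass_{R}(M)$ produces an $M$-regular element $x\in R_{1}$. The short exact sequence $0\to M(-1)\xrightarrow{x} M\to M/xM\to 0$ induces a long exact sequence of local cohomology; taking $n$-th graded pieces yields, for each $n$, the exact sequence of $R_{0}$-modules
\[
V_{n}\to U_{n-1}\xrightarrow{x} U_{n}\to W_{n},
\]
where $U_{n}:=H^{i}_{R_{+},J}(M)_{n}$, $V_{n}:=H^{i-1}_{R_{+},J}(M/xM)_{n}$, and $W_{n}:=H^{i}_{R_{+},J}(M/xM)_{n}$. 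By Theorem \ref{m0}, each of these is finite over $R_{0}$ and $U_{n}=W_{n}=0$ for $n\gg 0$.

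The fragment $H^{j}_{R_{+},J}(M)\to H^{j}_{R_{+},J}(M/xM)\to H^{j+1}_{R_{+},J}(M)(-1)$ of the long exact sequence has both end terms finite for $j<i-1$, so $H^{j}_{R_{+},J}(M/xM)$ is finite for $j<i-1$, and the inductive hypothesis applied to $M/xM$ at index $i-1$ delivers the asymptotic stability of $\Ass_{R_{0}}(V_{n})$ for $n\to -\infty$. Splitting the four-term exact sequence into $0\to K_{n}\to U_{n-1}\to \im x\to 0$ and $0\to \im x\to U_{n}\to C_{n}\to 0$, with $K_{n}$ a quotient of $V_{n}$ and $C_{n}$ a submodule of $W_{n}$, one obtains the standard containments
\[
\Ass_{R_{0}}(U_{n-1})\subseteq \Ass_{R_{0}}(K_{n})\cup \Ass_{R_{0}}(U_{n}), \qquad \Ass_{R_{0}}(U_{n})\subseteq \Ass_{R_{0}}(\im x)\cup \Ass_{R_{0}}(W_{n}).
\]
The plan is to iterate these, starting from the vanishing $U_{n}=0$ for $n\gg 0$, so as to propagate the stability of $\Ass_{R_{0}}(V_{n})$ downward to $\Ass_{R_{0}}(U_{n})$.

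The main obstacle is the appearance of the term $W_{n}$, since the inductive hypothesis at index $i$ cannot be applied to $M/xM$ (whose finiteness hypothesis holds only for $j<i-1$). I expect to bypass this by first establishing that the total union $\bigcup_{n\in\mathbb{Z}}\Ass_{R_{0}}(U_{n})$ is a finite set --- via a careful analysis of the graded associated primes of the graded $R$-module $H^{i}_{R_{+},J}(M)$ together with the finite generation of each $U_{n}$ over the Noetherian ring $R_{0}$ --- after which, in view of the containments above and the stability of $\Ass_{R_{0}}(V_{n})$, the sequence $\Ass_{R_{0}}(U_{n})$ takes values in a fixed finite set and is forced to stabilize for $n\to -\infty$.
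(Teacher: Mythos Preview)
Your setup is right, but the ``obstacle'' you identify is illusory, and the workaround you propose is both unproved and insufficient. First, knowing only that every $\Ass_{R_0}(U_n)$ lies in a fixed finite set does not force the sequence to stabilize (it could oscillate). Second, the finiteness of $\bigcup_n \Ass_{R_0}(U_n)$ is not obvious: $H^{i}_{R_+,J}(M)$ is not known to be a finite $R$-module, so its set of graded associated primes need not be finite a priori. Third, even granting stability of $\Ass_{R_0}(V_n)$, the set $\Ass_{R_0}(K_n)$ is not controlled by $\Ass_{R_0}(V_n)$ when $K_n$ is merely a \emph{quotient} of $V_n$.

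What you are missing is the one place where the hypothesis ``$H^{j}_{R_+,J}(M)$ is finite for all $j<i$'' is used beyond propagating the inductive hypothesis to $M/xM$: since $H^{i-1}_{R_+,J}(M)$ is a finite graded $R$-module, its components vanish for $n\ll 0$. Hence for $n\le n_2$ the map $V_{n+1}\to U_n$ is \emph{injective}, so $K_{n+1}=V_{n+1}$ and you get the three-term exact sequence
\[
0\longrightarrow H^{i-1}_{R_+,J}(M/xM)_{n+1}\longrightarrow H^{i}_{R_+,J}(M)_{n}\xrightarrow{\ x\ } H^{i}_{R_+,J}(M)_{n+1}.
\]
This yields $X\subseteq \Ass_{R_0}(U_n)\subseteq X\cup \Ass_{R_0}(U_{n+1})$ for all $n\le n_2$, hence $\Ass_{R_0}(U_n)\subseteq \Ass_{R_0}(U_{n+1})$ for $n<n_2$. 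Since each $U_n$ is a finite $R_0$-module (Theorem \ref{m0}), this decreasing chain of finite subsets of $\Ass_{R_0}(U_{n_2})$ stabilizes. The term $W_n$ never enters.
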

\begin{proof}
We use induction on $i$. For $i=0$ the result is clear from the fact
that $ H^{0}_{R _{+},J}(M)_{n}=0$ for all $n\ll 0$. Now, let $i>0$.
In view of the natural graded isomorphism, $H^{i}_{R_{+},J}(M)\cong
H^{i}_{R_{+},J}(M/\Gamma _{R_{+},J}(M))$, for all $i \in
\mathbb{N}_{0}$, and using \cite[Lemma 15.1.4]{B-SH}, we may assume
that  there exists a homogeneous element $x\in R_{1}$ which is a non
zero-divisor on $M$. Now, by the long exact sequence $$
H^{j-1}_{R_{+},J}(M)\rightarrow H^{j-1}_{R_{+},J}(M/xM)\rightarrow
H^{j}_{R_{+},J}(M)(-1)\xrightarrow {x} H^{j}_{R_{+},J}(M)$$ for all
$j\in \mathbb{Z}$, we have $H^{j }_{R_{+},J}(M/xM)$ is finite for
all $j< i- 1$. Hence, by the inductive hypothesis,
$$\Ass_{R_{0}}(H^{i-1}_{R_{+},J}(M/xM)_{n})=\Ass_{R_{0}}(H^{i-1}_{R_{+},J}(M/xM)_{n_{1}})=: X$$
for some $n_{1} \in \mathbb{Z}$ and all $n\leq n_{1}$. Furthermore,
there is some $n_{2}< n_{1}$ such that
$H^{i-1}_{R_{+},J}(M)_{n+1}=0$ for all $n\leq n_{2}$. Then for all
$n\leq n_{2}$ we have the exact sequence
$$0\rightarrow H^{i-1}_{R_{+},J}(M/xM)_{n+1}\rightarrow
H^{i}_{R_{+},J}(M)_{n}\xrightarrow {x} H^{i}_{R_{+},J}(M)_{n+1}.$$
Thus, it shows that $$X\subseteq
\Ass_{R_{0}}(H^{i}_{R_{+},J}(M)_{n})\subseteq X\cup
\Ass_{R_{0}}(H^{i}_{R_{+},J}(M)_{n+1})$$ for all $n\leq n_{2}$.
Hence $$\Ass_{R_{0}}(H^{i}_{R_{+},J}(M)_{n})\subseteq
\Ass_{R_{0}}(H^{i}_{R_{+},J}(M)_{n+1})$$ for all $n< n_{2}$ and,
using the assumption, the proof is complete.

\end{proof}

In the rest of paper, we pay attention to the Artinianness property
of the graded modules $H^{i}_{R_{+},J}(M)$. The following
proposition, gives a graded analogue of \cite[Theorem 2.2]{CH-W}.

\begin{thm}\label{ch-w}
Assume that $R_{0}$ is a local ring with maximal ideal $\fm_{0}$. If
$c\in \mathbb{Z}$ and $H^{i}_{R_{+},\fm_{0}R}(M)$ is Artinian for
all $i>c$, then the $R$-module
$H^{c}_{R_{+},\fm_{0}R}(M)/\fm_{0}H^{c}_{R_{+},\fm_{0}R}(M)$ is
Artinian.

\end{thm}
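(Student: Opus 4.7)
The plan is to induct on $n$, the minimal number of generators of $\fm_{0}$ as an ideal of $R_{0}$. For the base case $n=0$, so $\fm_{0}=0$ and $R_{0}$ is a field, Remark~3.1(iii) gives $H^{c}_{R_{+},\fm_{0}R}(M)=H^{c}_{R_{+}}(M)$; since $R_{+}$ is then the unique maximal ideal of $R$ containing the support of $H^{c}_{R_{+}}(M)$, localizing at $R_{+}$ gives a local ring and the Artinianness of local cohomology at a maximal ideal \cite[Theorem 7.1.3]{B-SH} yields that $H^{c}_{R_{+}}(M)$ is Artinian as an $R$-module. Since $\fm_{0}H^{c}_{R_{+}}(M)=0$, the desired quotient equals $H^{c}_{R_{+}}(M)$ itself.

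For the inductive step with $\fm_{0}=(x_{1},\ldots,x_{n})$, set $x:=x_{n}$ and $\fm_{0}':=(x_{1},\ldots,x_{n-1})$, and write $H^{i}$ for $H^{i}_{R_{+},\fm_{0}R}$. First I would replace $M$ by $M/\Gamma_{R_{+},\fm_{0}R}(M)$ (which preserves $H^{c}$ for $c\geq 1$), then invoke prime avoidance---after a faithfully flat base change making the residue field of $R_{0}$ infinite if necessary---to choose $x$ to be a non-zero-divisor on $M$ that is also a minimal generator of $\fm_{0}$. The short exact sequence $0\to M\xrightarrow{x}M\to M/xM\to 0$ induces a long exact sequence in $H^{i}$; for $i>c$ the Artinianness of $H^{i}(M)$ and $H^{i+1}(M)$ forces that of $H^{i}(M/xM)$. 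Since $M/xM$ is a finite graded module over $R':=R/xR$, standard graded over the local ring $(R_{0}/(x),\fm_{0}/(x))$ whose maximal ideal has at most $n-1$ generators, and since change-of-rings (immediate from the definition of $\Gamma_{I,J}$) identifies $H^{i}(M/xM)$ with the corresponding cohomology over $R'$, the inductive hypothesis gives that $H^{c}(M/xM)/\fm_{0}H^{c}(M/xM)$ is Artinian.

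To finish, I extract the short exact sequence
\[ 0\to H^{c}(M)/xH^{c}(M)\to H^{c}(M/xM)\to \bigl(0:_{H^{c+1}(M)}x\bigr)\to 0 \]
from the long exact sequence, noting the rightmost term is Artinian as a submodule of the Artinian $H^{c+1}(M)$. Applying $-\otimes_{R}R/\fm_{0}'R$ and using $(H^{c}(M)/xH^{c}(M))\otimes_{R}R/\fm_{0}'R=H^{c}(M)/\fm_{0}H^{c}(M)$, together with the fact that $\mathrm{Tor}_{1}^{R}$ of an Artinian module with a finitely presented one remains Artinian (compute via a finite-rank projective resolution of $R/\fm_{0}'R$ and note that each tensor term is a finite direct sum of copies of the Artinian module), the resulting four-term exact sequence sandwiches $H^{c}(M)/\fm_{0}H^{c}(M)$ between Artinian modules, completing the induction.

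The case $c=0$ must be handled separately: $\Gamma_{R_{+},\fm_{0}R}(M)$ is finitely generated, and its quotient modulo $\fm_{0}$ is $R_{+}$-torsion by the defining property of $\Gamma_{R_{+},\fm_{0}R}$, hence a finitely generated $R_{+}$-torsion module over $R/\fm_{0}R$, which is finite-dimensional over the residue field $R_{0}/\fm_{0}$ and therefore Artinian. The hard part will be the reduction to a non-zero-divisor $x$: it requires prime avoidance and possibly a residue-field extension, with the additional technicality of verifying that Artinianness descends under the faithfully flat base change.
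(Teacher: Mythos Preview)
There is a genuine gap in your reduction step. You propose to replace $M$ by $M/\Gamma_{R_{+},\fm_{0}R}(M)$ and then ``invoke prime avoidance \ldots\ to choose $x$ to be a non-zero-divisor on $M$ that is also a minimal generator of $\fm_{0}$.'' But $(R_{+},\fm_{0}R)$-torsion-freeness does \emph{not} force $\fm_{0}$ to contain an $M$-regular element. Recall that $\Gamma_{R_{+},\fm_{0}R}(M)=0$ only says that no $\fp\in\Ass_{R}(M)$ lies in $W(R_{+},\fm_{0}R)=\{\fp:\ R_{+}\subseteq\sqrt{\fp+\fm_{0}R}\}$; a prime containing $\fm_{0}R$ need not belong to this set. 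Concretely, take $R_{0}=k[[t]]$, $R=R_{0}[X]$, $M=R/tR$. Then $\Ass_{R}(M)=\{tR\}$, and $tR\notin W(R_{+},\fm_{0}R)$ since $X\notin\sqrt{tR}$, so $\Gamma_{R_{+},\fm_{0}R}(M)=0$; yet $\fm_{0}=(t)$ annihilates $M$, and no element of $\fm_{0}$ is $M$-regular. Your induction on $\mu(\fm_{0})$ therefore cannot get started.

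The paper repairs this by two moves you are missing. First, it observes that $H^{i}_{R_{+},\fm_{0}R}(M)=H^{i}_{\fm,\fm_{0}R}(M)$ for $\fm=\fm_{0}R+R_{+}$ (the two pairs have the same $W$-set). Second, it reduces modulo $\Gamma_{\fm_{0}R}(M)$ rather than modulo $\Gamma_{R_{+},\fm_{0}R}(M)$: on the $\fm_{0}R$-torsion submodule one has $H^{i}_{\fm,\fm_{0}R}(\Gamma_{\fm_{0}R}(M))=H^{i}_{\fm}(\Gamma_{\fm_{0}R}(M))$, which is Artinian for all $i$, so the long exact sequence lets one pass to $M/\Gamma_{\fm_{0}R}(M)$. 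Now $\fm_{0}$ genuinely contains an $M$-regular element $x_{0}$, and the paper inducts on $\dim M$ (which drops upon passing to $M/x_{0}M$). Your endgame with the four-term $\mathrm{Tor}$ sequence is essentially the same as the paper's exact sequences $(A)$ and $(B)$; if you replace your torsion reduction with the paper's, your induction on $\mu(\fm_{0})$ would also go through, but as written the prime-avoidance step fails.
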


\begin{proof}
Let $\fm:=\fm_{0}+R_{+} $ be the unique graded maximal ideal of $R$
and let $J:=\fm_{0}R$. We have $H^{i}_{R_{+},J}(M)=H^{i}_{\fm,J}(M)$
for all $i$. Thus we can replace $R_{+}$ by $\fm$. We proceed the
assertion by induction on $n:=\dim M$. The result is clear in the
case $n=0$. Let $n>0$ and that the statement is proved for all
values less than $n$. Now, using the long exact sequence

$$H^{i}_{\fm,J}(\Gamma _{\J}(M)\rightarrow H^{i}_{\fm,J}(M)\rightarrow H^{i}_{\fm,J}(M/\Gamma
_{J}(M))\rightarrow H^{i+1}_{\fm,J}(\Gamma _{\J}(M)), $$   and the
fact that  $H^{i}_{\fm,J}(\Gamma _{J}(M))=H^{i}_{\fm}(\Gamma
_{J}(M))$ is Artinian for all $i$, replacing $M$ with $M/\Gamma
_{J}(M)$, we may assume that $\Gamma _{J}(M)=0$. Therefore,  there
exists $x_{0}\in\fm_{0}\backslash Z_{R_{0}}(M)$. Now, the  long
exact sequence
$$H^{i}_{\fm,J}(M)\xrightarrow{x_{0}}
H^{i}_{\fm,J}(M)\xrightarrow{\alpha_i}
H^{i}_{\fm,J}(M/x_{0}M)\xrightarrow{\beta_i}H^{i+1}_{\fm,J}(M)$$
implies that $H^{i}_{\fm,J}(M/x_{0}M)$ is Artinian for all $i>c$ and
so, by inductive hypothesis,
$H^{c}_{\fm,J}(M/x_{0}M)/\fm_{0}H^{c}_{\fm,J}(M/x_{0}M)$ is
Artinian. Considering the   exact sequences

$$0\rightarrow Im\alpha_c\rightarrow
H^{c}_{\fm,J}(M/x_{0}M)\rightarrow Im\beta_c\rightarrow 0$$ and
$$H^{c}_{\fm,J}(M)\xrightarrow{x_{0}}H^{c}_{\fm,J}(M)\xrightarrow{\alpha_c} Im\alpha_c \rightarrow
0,$$ we get the following exact sequences
 $$\begin{array}{lll}Tor_{1}^R(R_{0}/\fm_{0}, Im\beta_c)&\rightarrow Im\alpha_c/\fm_{0}Im\alpha\rightarrow& H^{c}_{\fm,J}(M/x_{0}M)/\fm_{0}H^{c}_
 {\fm,J}(M/x_{0}M)  \end{array}\hspace{0.4cm}(A)$$
and
$$H^{c}_{\fm,J}(M)/\fm_{0}H^{c}_{\fm,J}(M)\xrightarrow{x_{0}} H^{c}_{\fm,J}(M)/\fm_{0}H^{c}_{\fm,J}(M)\rightarrow Im\alpha_c/\fm_{0}Im\alpha_c\rightarrow
0.\hspace{1.4cm}(B)$$ these two exact sequences implies that
$H^{c}_{\fm,J}(M)/\fm_{0}H^{c}_{\fm,J}(M)$ is Artinian and the
assertion follows.
\end{proof}

Let $I,J$ be ideals of $R$. Chu and Wang in \cite{CH-W} defined
$cd(I,J,R):= \sup\{i ; H^{i}_{I,J}(M)\neq0\}$.

The following corollary is an immediate consequence of Theorem
\ref{ch-w}.

\begin{cor}\label{cd}
Assume that $R_{0}$ is  local with maximal ideal $\fm_{0}$. If
$c:=cd(R_{+},\fm_{0}R,M)$. Then
$H^{c}_{R_{+},\fm_{0}R}(M)/\fm_{0}H^{c}_{R_{+},\fm_{0}R}(M)$ is
Artinian.
\end{cor}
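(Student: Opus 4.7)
The plan is to simply invoke Theorem \ref{ch-w} with the value $c = cd(R_{+},\fm_{0}R,M)$. The corollary is flagged as an "immediate consequence" in the paper, so the proof should be essentially one short paragraph unpacking the definition of cohomological dimension.

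First I would recall the definition $cd(R_{+},\fm_{0}R,M) = \sup\{i : H^{i}_{R_{+},\fm_{0}R}(M) \neq 0\}$ from the paragraph preceding the corollary. Taking $c$ equal to this supremum, the definition immediately gives $H^{i}_{R_{+},\fm_{0}R}(M) = 0$ for every $i > c$. Since the zero module is Artinian, this means the hypothesis of Theorem \ref{ch-w}, namely that $H^{i}_{R_{+},\fm_{0}R}(M)$ is Artinian for all $i > c$, is satisfied (vacuously, in a strong sense).

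Applying Theorem \ref{ch-w} directly then yields that the $R$-module
\[
H^{c}_{R_{+},\fm_{0}R}(M)/\fm_{0}H^{c}_{R_{+},\fm_{0}R}(M)
\]
is Artinian, which is exactly the conclusion of the corollary.

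There is no real obstacle here; the only thing to verify is that $cd(R_{+},\fm_{0}R,M)$ is a well-defined integer (or $-\infty$, in the trivial case where all cohomology vanishes, in which case the statement is vacuous). In particular, if $M \neq 0$ and $H^{i}_{R_{+},\fm_{0}R}(M) = 0$ for large $i$ by standard vanishing (for instance, above the arithmetic rank or the dimension of $M$), then $c$ is a non-negative integer and the argument goes through cleanly. The whole proof therefore amounts to: by definition of $c$ the top cohomology is nonzero and everything above it vanishes, so Theorem \ref{ch-w} applies verbatim.
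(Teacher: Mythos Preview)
Your proposal is correct and matches the paper's approach exactly: the paper states that the corollary is an immediate consequence of Theorem \ref{ch-w} and gives no further proof, and your argument simply unpacks why---namely that $H^{i}_{R_{+},\fm_{0}R}(M)=0$ (hence Artinian) for all $i>c$ by the definition of $cd$.
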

Let $T=\bigoplus_{n \in \mathbb{N}_{0}} T_{n}$ be a graded
$R$-module. Following \cite{B}, we say that $T$ is tame or
asymptotically gap free if either $T_{n}=0 $ for all $n\ll 0$ or
else $T_{n}\neq 0 $ for all $n\ll 0$. Now, as an application of the
above  Corollary, we have the following.
\begin{cor}\label{tam}
Let $(R_0, \fm_0)$ be local and $c := cd(R_{+},\fm_{0}R,M)$. If one
of the equivalent conditions of Theorem \ref{m0} holds, then
$H^{c}_{R_{+},\fm_{0}R}(M)$ is tame.
\end{cor}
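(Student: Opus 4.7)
Write $H := H^{c}_{R_{+},\fm_{0}R}(M)$ for brevity. By Corollary \ref{cd}, the quotient $T := H/\fm_{0}H$ is Artinian as an $R$-module, hence also as a graded $R$-module; since $\fm_{0}$ annihilates $T$, it is in fact a graded module over the standard graded $k$-algebra $\bar R := R/\fm_{0}R$, where $k := R_{0}/\fm_{0}$. The plan is to first establish tameness of $T$, and then lift this to tameness of $H$ by means of the Nakayama lemma.

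For the tameness of $T$ I would invoke graded Matlis duality over $\bar R$. The graded Matlis dual $T^{\vee}$, with $(T^{\vee})_{n} := \mathrm{Hom}_{k}(T_{-n},k)$, is a finitely generated graded $\bar R$-module. By Hilbert--Serre its Hilbert function $n \mapsto \dim_{k}(T^{\vee})_{n}$ agrees with a polynomial in $n$ for $n \gg 0$, so it is either identically zero for $n \gg 0$ (in which case $T^{\vee}$, and therefore $T$, has finite length) or strictly positive for all $n \gg 0$. Translating back through the duality, $\dim_{k} T_{n}$ is either zero for all $n \ll 0$ or positive for all $n \ll 0$; in either case $T$ is tame in the sense of the definition recalled before Corollary \ref{tam}.

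To transfer tameness back to $H$ I would invoke the equivalent condition (b) of Theorem \ref{m0}: every component $H_{n}$ is a finitely generated module over the local ring $(R_{0},\fm_{0})$. Nakayama's lemma then gives the equivalence
\[
 T_{n} \;=\; H_{n}/\fm_{0}H_{n} \;=\; 0 \;\Longleftrightarrow\; H_{n} \;=\; 0,
\]
so the dichotomy for $T_{n}$ at $n \ll 0$ transfers verbatim to the same dichotomy for $H_{n}$, yielding tameness of $H$. The principal obstacle is the first step: tameness of a graded Artinian module over the standard graded $k$-algebra $\bar R$. While it follows cleanly from graded Matlis duality together with Hilbert--Serre, a careful set-up of the graded dual (and the verification that each $T_{n}$ has finite $k$-dimension so that the duality applies componentwise) is required before the polynomial-growth argument can be invoked.
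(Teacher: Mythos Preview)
Your proposal is correct and follows the same two-step strategy as the paper's proof: first deduce that $T=H/\fm_{0}H$ is tame from its Artinianness (Corollary~\ref{cd}), then transfer this to $H$ via Nakayama's lemma using the finiteness of the components $H_{n}$ guaranteed by Theorem~\ref{m0}(b). The only difference is that the paper asserts ``Artinian $\Rightarrow$ tame'' without elaboration, whereas you supply an explicit argument via graded Matlis duality over $\bar R=R/\fm_{0}R$ and Hilbert--Serre; this is a clean and standard way to justify that step, and your remark that one must first check each $T_{n}$ is finite-dimensional over $k$ is well taken (it follows directly from the Artinianness of $T$ over the $k$-algebra $\bar R$).
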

\begin{proof}
Let $J=\fm_{0}R$. Since $H^{c}_{R_{+},
\fm_{0}R}(M)/\fm_{0}H^{c}_{R_{+}, \fm_{0}R}(M)$ is Artinian so it is
tame. Now, the result follows using Nakayama's lemma.
\end{proof}

\begin{prop}\label{hj}
Let $(R_{0}, \fm_{0})$ be   local, $J\subseteq R_{+}$ be a
homogenous ideal of $R$ and $g(M):= \sup \{i : \forall j < i,
\ell_{R_{0}}(H^{j}_{R_{+}}(M)_{n}) <\infty, \forall n\ll0 \}$ be
finite. Then, the graded $R$-module
$H^{i}_{\fm_{0}R,J}(H^{j}_{R_{+}}(M))$ is Artinian for $i = 0, 1$
and all $j\leq g(M)$.
\end{prop}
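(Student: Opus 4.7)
The plan is to reduce to ordinary local cohomology at $\fm_0R$ via the grading, and then extract Artinianness from a Grothendieck spectral sequence whose abutment $H^\bullet_\fm(M)$ is Artinian. For the first reduction, I would exploit $J\subseteq R_+$: for a homogeneous element $y\in N_n$ of a graded $R$-module $N$, the condition $(\fm_0R)^ky\subseteq Jy$ inspected in degree $n$ forces $\fm_0^ky=0$, since $(Jy)_n=0$ when $J\subseteq R_+$. Hence $\Gamma_{\fm_0R,J}$ and $\Gamma_{\fm_0R}$ coincide as functors on $^\ast\mathcal{C}$, and by the Remark following Proposition \ref{lim} their right-derived functors coincide as well, giving $H^i_{\fm_0R,J}(N)\cong H^i_{\fm_0R}(N)$ for graded $N$. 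Since $\fm_0R$ is generated by degree-zero elements, the \v Cech complex is degreewise, so $H^i_{\fm_0R}(N)_n\cong H^i_{\fm_0}(N_n)$ as $R_0$-modules for each $n$.

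Next, I would analyze the components of $N=H^q_{R_+}(M)$. Each $(H^q_{R_+}(M))_n$ is a finite $R_0$-module and vanishes for $n\gg 0$ by standard graded local cohomology theory. For $q<g(M)$, the definition of $g(M)$ supplies finite length, equivalently $\fm_0$-torsion, for $n\ll 0$. Therefore $H^i_{\fm_0}((H^q_{R_+}(M))_n)=0$ for $i\geq 1$ in all but finitely many $n$, while for those finitely many $n$ each such module is Artinian over $R_0$. A graded $R$-module with Artinian $R_0$-components in only finitely many degrees is Artinian as an $R$-module, so $H^i_{\fm_0R}(H^q_{R_+}(M))$ is Artinian for every $i\geq 1$ and every $q<g(M)$. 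The case $q=0$ is handled separately: $\Gamma_{R_+}(M)$ is annihilated by some power of $R_+$, hence finite over $R_0$ with only finitely many nonzero graded components, so $H^i_{\fm_0R}(\Gamma_{R_+}(M))$ is Artinian for every $i$.

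Finally, I would invoke the Grothendieck spectral sequence
\[E_2^{p,q}=H^p_{\fm_0R}(H^q_{R_+}(M))\Longrightarrow H^{p+q}_\fm(M),\]
available because $\Gamma_\fm=\Gamma_{\fm_0R}\circ\Gamma_{R_+}$ on graded modules and because $\Gamma_{R_+}$ preserves $^\ast$-injectivity, hence sends $^\ast$-injectives to $\Gamma_{\fm_0R}$-acyclics. The abutment is Artinian since $M$ is finite graded and $\fm=\fm_0R+R_+$ is the graded maximal ideal. Fix $p\in\{0,1\}$ and $q\leq g(M)$. The incoming differentials $d_r\colon E_r^{p-r,q+r-1}\to E_r^{p,q}$ vanish for $r\geq 2$ (since $p-r<0$), so the pages form a descending filtration by honest submodules $E_2^{p,q}\supseteq E_3^{p,q}\supseteq\cdots\supseteq E_\infty^{p,q}$. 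Here $E_\infty^{p,q}$ is a subquotient of the Artinian $H^{p+q}_\fm(M)$, and each quotient $E_r^{p,q}/E_{r+1}^{p,q}$ embeds into $E_r^{p+r,q-r+1}$, a subquotient of $E_2^{p+r,q-r+1}$; the latter is Artinian because either $q-r+1=0$ (the $\Gamma_{R_+}(M)$ case above) or $p+r\geq 2$ with $q-r+1\leq q-1<g(M)$ (the previous paragraph). Since the spectral sequence converges after finitely many pages, $E_2^{p,q}$ is a finite iterated extension of Artinian modules, hence Artinian.

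The main obstacle I expect is the spectral-sequence bookkeeping: one must verify that every position $(p+r,q-r+1)$ showing up in the filtration lies in the Artinian region established beforehand, and that the filtration consists of genuine submodules, which rests on the restriction $p\in\{0,1\}$ that forces the incoming differentials to vanish. Once these checks are organized, the proof is a clean synthesis of the reduction coming from $J\subseteq R_+$, the degreewise component analysis using $g(M)$, and the spectral sequence filtration argument.
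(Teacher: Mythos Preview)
Your proof is correct. The reduction $H^i_{\fm_0R,J}\cong H^i_{\fm_0R}$ matches the paper's, though you obtain it by showing the torsion functors agree on the entire graded category (via the degree argument exploiting $J\subseteq R_+$), whereas the paper observes more specifically that $H^j_{R_+}(M)$ is $R_+$-torsion, hence $J$-torsion, and invokes the standard fact that $(I,J)$-local cohomology agrees with $I$-local cohomology on $J$-torsion modules. After this common reduction the paper simply cites \cite[Theorem~2.4]{HJZ} for the Artinianness of $H^i_{\fm_0R}(H^j_{R_+}(M))$ when $i\in\{0,1\}$ and $j\le g(M)$; you instead supply a direct argument via the Grothendieck spectral sequence $E_2^{p,q}=H^p_{\fm_0R}(H^q_{R_+}(M))\Rightarrow H^{p+q}_\fm(M)$, using the componentwise vanishing of $H^{\ge 1}_{\fm_0}$ on finite-length $R_0$-modules to establish Artinianness in the region $\{p\ge 1,\ q<g(M)\}\cup\{q=0\}$ and then filtering the remaining $E_2^{p,q}$ (for $p\in\{0,1\}$, $q\le g(M)$) through Artinian subquotients coming from the outgoing differentials and the Artinian abutment $H^\bullet_\fm(M)$. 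Your route is more self-contained and essentially reconstructs the content of the cited reference; the paper's version is shorter but leans on that external input.
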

\begin{proof}
Since $J\subseteq R_{+}$, so $H^{j}_{R_{+}}(M)$ is $J$-torsion.
Therefore, $H^i_{\fm_{0}R,J}(H^{j}_{R_{+}}(M))\cong
H^i_{\fm_{0}R}(H^{j}_{R_{+}}(M))$. Now, the result follows from
\cite[Theorem 2.4]{HJZ}.
\end{proof}

\end{document}